\titleformat{\subsection}[runin]
  {\normalfont\normalsize\bf}{\thesubsection}{1em}{}
\newcommand{\bC}{\mathbb{C}}    
\newcommand{\bR}{\mathbb{R}}    
\newcommand{\bZ}{\mathbb{Z}}    
\newcommand{\cD}{\mathcal{D}}
\newcommand{\cE}{\mathcal{E}}   
\newcommand{\cO}{\mathcal{O}}   
\newcommand{\sA}{\mathscr{A}}   
\newcommand{\sD}{\mathscr{D}}   
\newcommand{\Coh}{\mathrm{Coh}} 
\newcommand{\Db}{\mathrm{D^b}}  
\newcommand{\Aut}{\operatorname{Aut}}   
\newcommand{\Hom}{\operatorname{Hom}}   
\newcommand{\Pic}{\operatorname{Pic}}	
\newtheorem*{thm*}{Theorem}
\newtheorem*{prop*}{Proposition}
\newtheorem*{cor*}{Corollary}
\newtheorem{thm}{Theorem}[section]
\newtheorem{lemma}[thm]{Lemma}
\numberwithin{equation}{section}
\theoremstyle{definition}
\newtheorem{rmk}[thm]{Remark}
\begin{document}
\title{Shifting numbers of abelian varieties \\ via bounded $t$-structures}
\author{Yu-Wei Fan}
\date{}

\newcommand{\ContactInfo}{{
\bigskip\footnotesize

\bigskip
\noindent Y.-W.~Fan,
\textsc{Yau Mathematical Sciences Center\\
Tsinghua University\\
Beijing 100084, P. R. China}\par\nopagebreak
\noindent\textsc{Email:} \texttt{ywfan@mail.tsinghua.edu.cn}

}}

\maketitle
\thispagestyle{titlepage}

\begin{abstract}
The shifting numbers measure the asymptotic amount by which an endofunctor of a triangulated category translates inside the category, and are analogous to Poincar\'e translation numbers that are widely used in dynamical systems.
Motivated by this analogy, Fan--Filip raised the following question: ``Do the shifting numbers define a quasimorphism on the group of autoequivalences of a triangulated category?"
An affirmative answer was given by Fan--Filip for the bounded derived category of coherent sheaves on an elliptic curve or an abelian surface, via properties of the spaces of Bridgeland stability conditions on these categories.

We prove in this article that the question has an affirmative answer for abelian varieties of arbitrary dimensions, generalizing the result of Fan--Filip.
One of the key steps is to establish an alternative definition of the shifting numbers via  bounded $t$-structures on triangulated categories.
 In particular, the full package of a Bridgeland stability condition (a bounded $t$-structure, and a central charge on a charge lattice) is not necessary for the purpose of computing the shifting numbers.
\end{abstract}


\section{Introduction}
The notion of a $t$-structure arose in the work of Beilinson, Bernstein, and Deligne \cite{BBD} on perverse sheaves. A full additive subcategory $\sA$ of a triangulated category $\sD$ is the \emph{heart} of a bounded $t$-structure on $\sD$ if and only if the following two conditions hold:
\begin{enumerate}[label=(\alph*)]
\item $\Hom_\sD(A_1[k_1],A_2[k_2])=0$ if $k_1>k_2$ and $A_1,A_2$ are objects of $\sA$,
\item for every nonzero object $E$ of $\sD$, there is a (unique) sequence of exact triangles
$$
\xymatrix@C=.5em{
0_{} \ar@{=}[r] & E_0 \ar[rrrr] &&&& E_1 \ar[rrrr] \ar[dll] &&&& E_2
\ar[rr] \ar[dll] && \cdots \ar[rr] && E_{m-1}
\ar[rrrr] &&&& E_m \ar[dll] \ar@{=}[r] &  E_{} \\
&&& A_1[k_1] \ar@{-->}[ull] &&&& A_2[k_2] \ar@{-->}[ull] &&&&&&&& A_m[k_m] \ar@{-->}[ull] 
}
$$
with the properties that $k_1>\cdots>k_m$ and that $A_1,\ldots,A_m$ are objects of $\sA$.
\end{enumerate}
\noindent A bounded $t$-structure is uniquely determined by its heart, which allows us to omit the definition of bounded $t$-structures and work with the hearts only.
Let us denote the maximal and minimal degrees of nonzero cohomology of an object $E$ with respect to the heart $\sA$ as:
$$
\phi^+_\sA(E)\coloneqq k_1 \qquad \text{ and } \qquad \phi^-_\sA(E)\coloneqq k_m.
$$
We show that these extremal degrees can be used to define invariants of \emph{endofunctors} of  triangulated categories.

\begin{thm}[see Theorem~\ref{thm-2}]
\label{thm}
Let $F\colon\sD\rightarrow\sD$ be an endofunctor of a triangulated category $\sD$ with a split generator $G$, and let $\sA\subseteq\sD$ be the heart of a bounded $t$-structure on $\sD$.
Then the limit
$$
\lim_{n\rightarrow\infty}\frac{\phi^+_\sA(F^nG)}{n}
$$
exists and is independent of the choices of $G$ and $\sA$, therefore gives an invariant of $F$.
Moreover, if $\sD$ admits a Serre functor, then the limit
$$
\lim_{n\rightarrow\infty}\frac{\phi^-_\sA(F^nG)}{n}
$$
also exists and is independent of the choices of $G$ and $\sA$.
\end{thm}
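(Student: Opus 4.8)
\emph{Proof proposal.} The plan is to manufacture, out of the heart $\sA$ and the split generator $G$, a sequence that is subadditive up to a constant, and then apply Fekete's lemma. For nonzero $E_1,E_2$ with $E_2$ in the thick subcategory generated by $E_1$, define the \emph{shift complexity}
$$
\tau^+(E_1,E_2)\;=\;\min\big\{\,k\in\bZ \;\big|\; E_2 \text{ is a direct summand of an iterated extension of objects } E_1[j],\ j\le k\,\big\},
$$
where an \emph{iterated extension} of a family $\mathcal S$ is an object of the smallest full subcategory containing $\mathcal S$ and closed under isomorphisms, finite direct sums, and extensions (if $A\to X\to B\to A[1]$ is a triangle with $A,B$ in the subcategory, so is $X$). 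Since $E_1$ is a split generator this is a well-defined integer, bounded below by $\phi^+_\sA(E_2)-\phi^+_\sA(E_1)$. I would then record the elementary properties of $\tau^+$: subadditivity, $\tau^+(E_1,E_3)\le\tau^+(E_1,E_2)+\tau^+(E_2,E_3)$ (concatenate the two building processes, absorbing the spurious summands); monotonicity under any exact endofunctor $\Phi$, $\tau^+(\Phi E_1,\Phi E_2)\le\tau^+(E_1,E_2)$ (apply $\Phi$ to an optimal building process); the shift rule $\tau^+(E_1,E_2[k])=\tau^+(E_1,E_2)+k$; and the comparison $\phi^+_\sA(E_2)\le\phi^+_\sA(E_1)+\tau^+(E_1,E_2)$, which follows from the long exact cohomology sequences of the defining triangles.

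The crucial input, and the step I expect to be the main obstacle, is a \emph{uniform generation lemma}: there is a constant $c=c(\sD,\sA,G)$ such that every nonzero $A\in\sA$ is a direct summand of an iterated extension of objects $G[j]$ with $|j|\le c$. Morally, since $A$ has no $\sA$-cohomology outside degree $0$, any building process for $A$ out of shifts of $G$ can be "pruned'' so as to use only a bounded window of shifts, the width depending only on how $G$ generates and on the $t$-structure; this is precisely where boundedness of the $t$-structure enters in an essential way. For $\sD$ the bounded derived category of an abelian variety (or any smooth projective variety) one can make this concrete: a resolution-of-the-diagonal/Beilinson-type argument produces, for every coherent sheaf, a resolution of bounded length by finite sums of shifts, drawn from a bounded range, of a fixed split generator, so the lemma holds with $c$ of the size of the dimension.

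Granting the uniform generation lemma, existence of $\lim_n\phi^+_\sA(F^nG)/n$ runs as follows. Write $F^{m+n}G=F^m(F^nG)$ and decompose $F^nG$, via condition~(b) of the bounded $t$-structure, as an iterated extension of its cohomology objects $H^j_\sA(F^nG)[-j]$ for $\phi^-_\sA(F^nG)\le j\le\phi^+_\sA(F^nG)$. Applying the exact functor $F^m$ exhibits $F^{m+n}G$ as an iterated extension of the objects $\big(F^mH^j_\sA(F^nG)\big)[-j]$, so
$$
\phi^+_\sA(F^{m+n}G)\;\le\;\max_{j}\Big(\phi^+_\sA\big(F^mH^j_\sA(F^nG)\big)+j\Big).
$$
Each $H^j_\sA(F^nG)$ lies in $\sA$, hence by the lemma is a summand of an iterated extension of $G[i]$ with $|i|\le c$; applying $F^m$, the object $F^mH^j_\sA(F^nG)$ is a summand of an iterated extension of $(F^mG)[i]$ with $|i|\le c$, so $\phi^+_\sA\big(F^mH^j_\sA(F^nG)\big)\le\phi^+_\sA(F^mG)+c$. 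Since $\max_j j=\phi^+_\sA(F^nG)$, this gives
$$
\phi^+_\sA(F^{m+n}G)\;\le\;\phi^+_\sA(F^mG)+\phi^+_\sA(F^nG)+c ,
$$
so $n\mapsto\phi^+_\sA(F^nG)+c$ is subadditive and Fekete's lemma yields the limit, equal to $\inf_n\big(\phi^+_\sA(F^nG)+c\big)/n$.

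For independence of the choices: two split generators $G,G'$ have $\tau^+(G,G'),\tau^+(G',G)<\infty$, and the functoriality of $\tau^+$ then forces $\big|\phi^+_\sA(F^nG)-\phi^+_\sA(F^nG')\big|\le\max\{\tau^+(G,G'),\tau^+(G',G)\}$ for all $n$, so the limit is unchanged. If $\sB$ is the heart of another bounded $t$-structure, the uniform generation lemma for $\sB$ bounds $\phi^+_\sA$ uniformly on $\sB$, and decomposing an arbitrary $E$ through its $\sB$-cohomology yields $\big|\phi^+_\sA(E)-\phi^+_\sB(E)\big|\le\mathrm{const}$ independently of $E$; hence the limit is independent of the heart. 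Finally, when $\sD$ has a Serre functor $S$, one uses $S$ and $\sA$ to build a dual bounded $t$-structure together with a dual split generator for which $\phi^-_\sA$ becomes, up to a global shift, the associated $\phi^+$; the statement for $\phi^-$ then reduces to the case already proved. Beyond the uniform generation lemma, this last duality step is the place I would expect to need the most care — identifying the exact bounded $t$-structure produced by $S$ and verifying that $F$ (or the relevant associated functor) behaves well with respect to it.
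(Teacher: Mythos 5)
Your argument stands or falls on the ``uniform generation lemma,'' and that lemma is precisely the part you have not proved --- nor is it clear it is true in the generality the theorem requires. The statement is about an arbitrary saturated, finite-type triangulated category $\sD$, an \emph{arbitrary} heart $\sA$ of a bounded $t$-structure, and an \emph{arbitrary} split generator $G$; your justification is a sketch only for $\Db$ of a variety with $\sA=\Coh(X)$ and a Beilinson-type generator. Boundedness of the $t$-structure says each single object has finitely many $\sA$-cohomologies, and split generation says each single object is built from finitely many shifts of $G$; neither gives a window of shifts of width $c$ that works \emph{uniformly} for all $A\in\sA$ at once, and ``pruning'' a building process to a bounded window is exactly the nontrivial content you would need to supply. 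The same unproven uniformity reappears, amplified, in your heart-independence step, where you assert $|\phi^+_\sA(E)-\phi^+_\sB(E)|\le\mathrm{const}$ for all $E$ and any two bounded hearts $\sA,\sB$ --- a strong uniform comparability of arbitrary bounded $t$-structures that is not a formal consequence of boundedness and is not known in this generality. Two further soft spots: Fekete's lemma only gives a limit in $[-\infty,\infty)$, so you would still need a linear lower bound on $\phi^+_\sA(F^nG)$ to get a finite limit; and your treatment of $\phi^-_\sA$ via a ``dual $t$-structure and dual generator'' built from the Serre functor is only a hope --- $S$ need not interact with $\sA$ in the required way, as you yourself flag.

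For comparison, the paper's proof avoids every uniform statement by arguing asymptotically and identifying the limit with the known invariant $\tau^\pm(F)$. For the upper bound direction it uses the single nonvanishing $\Hom(G,F^nG[-\epsilon^+(G,F^nG)])\neq0$, which forces $\phi^+_\sA(F^nG)\ge\epsilon^+(G,F^nG)+\phi^-_\sA(G)$ and hence $\liminf_n\phi^+_\sA(F^nG)/n\ge\tau^+(F)$, since $\tau^+(F)=\lim_n\epsilon^+(G,F^nG)/n$; for the reverse it observes that any tower exhibiting $F^nG\oplus\star$ as built from shifts $G[k_\ell]$ must use some $k_\ell\ge\phi^+_\sA(F^nG)-\phi^+_\sA(G)$, so $\delta_t(G,F^nG)\ge e^{(\phi^+_\sA(F^nG)-\phi^+_\sA(G))t}$ for $t>0$, and dividing $h_t(F)$ by $t$ and letting $t\to\infty$ gives $\limsup_n\phi^+_\sA(F^nG)/n\le\tau^+(F)$. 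The $\phi^-$ case is handled by a pointwise application of Serre duality, $\Hom(F^nG[-\epsilon^-(G,F^nG)],SG)\neq0$, not by constructing a dual $t$-structure. So to salvage your route you would have to actually prove the uniform generation lemma (and the uniform two-heart comparison) for arbitrary bounded hearts, which I expect is at least as hard as the theorem itself; otherwise the asymptotic comparison with $\epsilon^\pm$, $\delta_t$ and $h_t$ is the way the argument is meant to go.
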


Intuitively, these invariants measure how quickly do the objects $F^nG$  ``spread out" with respect to the heart $\sA$ as $n\rightarrow\infty$.
We will show that they coincide with the \emph{upper} and \emph{lower shifting numbers} $\tau^\pm(F)$ of  endofunctors \cite{EL,FF}. For basic properties and examples of $\tau^\pm(F)$, we refer the readers to \cite{FF}. The proof of Theorem~\ref{thm} is short, and the main ingredients already appeared in \cite{DHKK,EL,FF,KOT}.
We explain in Remark~\ref{rmk:simpler} that the theorem could provide a simpler way of computing the shifting numbers of endofunctors.

\begin{rmk}
It is proved in \cite[Theorem~2.2.6]{FF} that if $\sigma$ is a Bridgeland stability conditions on $\sD$, then the limit
$$
\lim_{n\rightarrow\infty}\frac{\phi^+_\sigma(F^nG)}{n}
$$
exists and coincides with the upper shifting number $\tau^+(F)$ (similar statement holds for $\phi^-_\sigma$ when $\sD$ admits a Serre functor). Here $\phi^+_\sigma(F^nG)$ denotes the largest phase of the $\sigma$-semistable factor of $F^nG$. Recall that a stability condition $\sigma=(Z,\sA)$ on $\sD$ consists of: a choice of the heart of a bounded $t$-structure $\sA$, a group homomorphism from $K_0(\sD)$ to a finite rank free abelian group $\Gamma$, and a group homomorphism $Z\colon\Gamma\rightarrow\bC$, which together satisfy certain axioms. In particular, one has
$$
\phi^+_\sA(F^nG)=-\lfloor-\phi^+_\sigma(F^nG)\rfloor-1.
$$
Therefore Theorem~\ref{thm} is a generalization of \cite[Theorem~2.2.6]{FF} which uses only the heart $\sA$ and, most importantly, does not require the existence of stability conditions associated with $\sA$.
\end{rmk}

\begin{rmk}
The shifting numbers of endofunctors can be considered as the categorical analogue of the \emph{Poincar\'e translation numbers} defined on $\text{Homeo}^+_\bZ(\bR)$, the central extension of the group of the orientation-preserving homeomorphisms of the circle $\bR/\bZ$ \cite[Section~1]{FF}.
These invariants also are closely related to the \emph{Lyapunov exponents} of the automorphisms of a \emph{shift of finite type} $(X,\sigma)$ \cite{Sch}.
\end{rmk}

Motivated by the analogy between the shifting numbers and the Poincar\'e translation numbers, it was asked in \cite{FF} that in what situations does the shifting number
$$
\tau\coloneqq\frac{\tau^++\tau^-}{2}\colon\Aut(\sD)\rightarrow\bR
$$
define a \emph{quasimorphism} on $\Aut(\sD)$, i.e.~there exists a constant $C>0$ such that $|\tau(F_1F_2)-\tau(F_1)-\tau(F_2)|<C$ holds for any $F_1,F_2\in\Aut(\sD)$.
Note that the existence of nontrivial quasimorphisms on a group has some algebraic consequences for the structures of the group, see for instance \cite{Kot}. 
It is proved in \cite{FF} that the shifting numbers give a quasimorphism on $\Aut(\sD)$ when $\sD$ is the bounded derived category of an elliptic curve or an abelian surface, where the proofs rely on certain global properties of the spaces of Bridgeland stability conditions on these categories. Such properties on the spaces of Bridgeland stability conditions (including the non-emptiness) are not known for abelian varieties of higher dimensions.

We prove the following result for abelian varieties of arbitrary dimensions, thanks to Theorem~\ref{thm}.

\begin{thm}[see Theorem~\ref{thm-3}]
\label{thm:abvar}
Let $X$ be an abelian variety over a field $\mathbf{k}$, and let $\sD=\Db(X)$ be the bounded derived category of coherent sheaves on $X$. Then the upper and lower shifting numbers coincide for any $F\in\Aut(\sD)$. Moreover, the map
$$
\tau=\tau^\pm\colon\Aut(\sD)\rightarrow\bR
$$
is a quasimorphism.
\end{thm}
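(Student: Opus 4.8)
The plan is to evaluate the shifting numbers through Theorem~\ref{thm} using the \emph{standard} heart $\sA=\Coh(X)$, for which $\phi^+_\sA(E)$ and $\phi^-_\sA(E)$ are the largest and smallest degrees in which $E$ has a nonzero cohomology sheaf, and then to feed this into Orlov's description of $\Aut(\Db(X))$. By flat base change we may assume $\mathbf k$ is algebraically closed: an autoequivalence over $\mathbf k$ is Fourier--Mukai, induces one over $\bar{\mathbf k}$, and preserves cohomology sheaves, so leaves $\phi^\pm_{\Coh}$ and hence the shifting numbers unchanged, while a quasimorphism on $\Aut(\Db(X_{\bar{\mathbf k}}))$ restricts to one on the subgroup $\Aut(\Db(X))$. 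Over $\bar{\mathbf k}$, Orlov's theorem gives a normal abelian subgroup $N\trianglelefteq\Aut(\Db(X))$ generated by translations, by twists with degree-$0$ line bundles, and by shifts; writing $N'\cong(X\times\widehat X)(\mathbf k)$ for the subgroup of translations and twists, the quotient $\widetilde U:=\Aut(\Db(X))/N'$ maps onto an arithmetic lattice $U(X\times\widehat X)$ in the real symplectic group $\mathrm{Sp}_{2g}(\bR)$, $g=\dim X$, with infinite cyclic kernel generated by a power of the shift functor.

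\emph{Step 1 (translations and twists are invisible to $\tau^\pm$).} If $T\in N'$ and $F\in\Aut(\Db(X))$, every conjugate $F^{-k}TF^{k}$ again lies in $N'$, so $(FT)^n=F^nT_n$ with $T_n\in N'$. Since each element of $N'$ is a \(t\)-exact autoequivalence of $(\Db(X),\Coh(X))$, the objects $T_nG$ (for a fixed split generator $G$) range over a bounded family; as any member of a bounded family is built from $G$, and $G$ from it, by a number of cones, shifts and direct summands bounded independently of $n$, and these operations commute with $F^n$, one gets $|\phi^\pm_\Coh(F^nT_nG)-\phi^\pm_\Coh(F^nG)|\le C$ for all $n$, hence $\tau^\pm(FT)=\tau^\pm(F)$. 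Therefore $\tau^\pm$ descends to conjugation-invariant, homogeneous functions on $\widetilde U$, restricting to $k\mapsto k$ on the central cyclic subgroup of shifts.

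\emph{Step 2 (coincidence, and identification with a rotation number).} By Orlov's classification, every autoequivalence of $\Db(X)$ --- in particular every power $F^n$ --- becomes, after composition with an element of $N$, a Fourier--Mukai functor whose kernel is a sheaf concentrated in a single cohomological degree; such a functor sends every line bundle, hence $G$, to a complex of amplitude at most $g$, whereas the $N$-factor moves $\phi^+_\Coh$ and $\phi^-_\Coh$ by the same amount. Consequently $\phi^+_\Coh(F^nG)-\phi^-_\Coh(F^nG)$ is bounded in $n$, forcing $\tau^+(F)=\tau^-(F)=:\tau(F)$; and since all the growth of $F^nG$ then occurs in the shift direction, $\phi^\pm_\Coh(\tilde u^nG)$ equals the displacement of $\tilde u^n$ in the central $\bZ$ of $\widetilde U$ up to an $O(1)$ error, so $\tau$ is exactly the translation number of the central extension $\widetilde U\to U(X\times\widehat X)$. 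This translation number is a homogeneous quasimorphism as soon as the extension has bounded Euler class, and here the extension is pulled back, along $U(X\times\widehat X)\hookrightarrow\mathrm{Sp}_{2g}(\bR)$, from the universal-cover extension $\widetilde{\mathrm{Sp}_{2g}(\bR)}\to\mathrm{Sp}_{2g}(\bR)$, whose Euler class is bounded: for $g=1$ this is Milnor--Wood and realises $\tau$ as the Poincar\'e rotation number on $\mathrm{Homeo}^+_\bZ(\bR)$ --- the mechanism of Fan--Filip --- while for $g\ge2$ it is Gromov's boundedness of the Maslov class. Pulling $\tau$ back along $\Aut(\Db(X))\twoheadrightarrow\widetilde U$ then completes the proof.

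\emph{Main obstacle.} The heart of the argument is the amplitude bound in Step~2: one must extract from Orlov's classification of autoequivalences of abelian varieties that iterating such a functor spreads objects only in the shift direction and never across an unbounded band of cohomological degrees --- a phenomenon tied to the triviality of the canonical bundle, and exactly what makes $\tau^+-\tau^-$ vanish. Care is also needed to pin down the arithmetic group $U(X\times\widehat X)$, its ambient real Lie group, and the central extension $\widetilde U$ for a general, non-principally-polarised abelian variety, so that the boundedness of the relevant Euler class can be invoked.
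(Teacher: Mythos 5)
The part of your argument establishing $\tau^+=\tau^-$ is essentially sound and is in fact the paper's own mechanism: by Orlov's theorem the kernel of any autoequivalence is a sheaf up to shift, and since $G$ is a direct sum of line bundles the complex $F^nG$ has cohomological amplitude bounded independently of $n$ (up to an overall shift), so $\phi^+_{\Coh}(F^nG)-\phi^-_{\Coh}(F^nG)$ stays bounded and the two shifting numbers agree. The genuine gap is in the quasimorphism step. You reduce it to two claims: that $\tau$ on $\widetilde U=\Aut(\Db(X))/(X\times\widehat X)$ equals the translation number of the central extension $\widetilde U\to U(X\times\widehat X)$, and that this extension has bounded class because it is pulled back from the universal cover of $\mathrm{Sp}_{2g}(\bR)$. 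Neither is proved, and the second is not even correct as stated: $U(X\times\widehat X)$ is in general not a lattice in $\mathrm{Sp}_{2g}(\bR)$ (for a generic principally polarized $X$ of any dimension one has $\mathrm{End}(X)=\bZ$, $\Hom(X,\widehat X)\cong\bZ$, and $U(X\times\widehat X)\cong\SL_2(\bZ)$; in general its structure depends on $\mathrm{End}(X)$ and $\NS(X)$), and identifying the categorical extension-by-shifts with a Hermitian Lie-theoretic central extension of bounded Euler class is exactly the comparison the paper's closing remark flags as established only for $\dim X\le 2$ (\cite[Section~5]{FF}) and open beyond. What your argument really needs is boundedness of the $2$-cocycle determined by choosing sheaf-kernel representatives, i.e.\ a uniform bound on the shift by which the convolution of two sheaf kernels fails to be a sheaf; this is never proved, and your ``main obstacle'' paragraph points at the comparatively easy amplitude bound for $F^nG$ instead of at this missing cocycle bound.

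The missing boundedness admits a direct, elementary proof, which is what the paper does and which makes the Lie-theoretic detour unnecessary: for every autoequivalence $F$ there is an integer $n_F$ with $n_F\le\phi^-_{\Coh}(FE)\le\phi^+_{\Coh}(FE)\le n_F+3d$ for \emph{every} coherent sheaf $E$ (sheaf kernel by Orlov, a locally free resolution of length at most $2d$ on $X\times X$, and $R\pi_{2*}$-amplitude at most $d$); applying this to the cohomology sheaves $A_0,\dots,A_{3d}$ of $F_2G$ gives $|\widetilde\tau(F_1F_2)-\widetilde\tau(F_1)-\widetilde\tau(F_2)|\le 6d$ for $\widetilde\tau(F)=\phi^+_{\Coh}(FG)$, whose homogenization is $\tau$. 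Note that here the bound must hold for arbitrary coherent sheaves, not just for the locally free $G$, which is why the constant is $3d$ rather than $d$; your amplitude discussion covers only line bundles. Finally, the ``bounded family / uniform generation time'' argument in your Step~1 is unjustified as written, but also unnecessary: since $X\times\widehat X$ is normal in $\Aut(\Db(X))$ and its elements are $t$-exact for $\Coh(X)$, one can write $(FT)^n=T'_nF^n$ with $T'_n$ in that subgroup and conclude $\phi^\pm_{\Coh}\big((FT)^nG\big)=\phi^\pm_{\Coh}(F^nG)$ exactly.
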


\noindent\textbf{Conventions.}
Throughout this article, all triangulated categories are assumed to be $\bZ$-graded, linear over a base field $\mathbf{k}$, saturated (i.e.~admits a dg-enhancement which is smooth and proper), and of finite type (i.e.~the $\mathbf{k}$-vector space $\oplus_{k\in\bZ}\Hom_\sD(E,F[k])$ is finite-dimensional for any pair of objects $E,F$ in $\sD$). Endofunctors of triangulated categories are assumed to be $\mathbf{k}$-linear, triangulated, and not virtually zero (i.e.~any power is not the zero functor).  An abelian variety over $\mathbf{k}$ is a connected, smooth, and proper group scheme over $\mathbf{k}$. \\

\noindent\textbf{Acknowledgment.}
The author would like to thank Simion Filip for collaborating on \cite{FF}, on which this article is based.

\section{Shifting numbers via bounded $t$-structures}
Let us begin with recalling several notions of ``distances" between pairs of objects in a triangulated category.
Let $E_1,E_2$ be objects in a triangulated category $\sD$. The \emph{complexity function} of $E_2$ relative to $E_1$ {\cite[Definition~2.1]{DHKK}} is defined to be the function sending a real number $t$ to
$$
\delta_t(E_1,E_2)\coloneqq\inf
\left\{
	\sum_{\ell=1}^m e^{k_\ell t}\ \Big|\ 
	{\substack{
		0=A_0\to A_1\to\cdots\to A_m\cong  E_2\oplus F\text{ for some }F\in\text{Ob}(\cD), \\
		\text{where }\text{Cone}(A_{\ell-1}\to A_\ell)\cong E_1[k_\ell]\text{ for all }\ell
	}}
\right\}\in[0,\infty].
$$
Here, $\delta_t(E_1,E_2)=0$ if $E_2\cong0$, and $\delta_t(E_1,E_2)=\infty$ if $E_2$ does not lie in the thick triangulated subcategory generated by $E_1$.

The \emph{upper} and \emph{lower Ext-distances}  from $E_1$ to $E_2$ are defined to be
\begin{align*}
\epsilon^+(E_1,E_2)&\coloneqq\max\{k\in\bZ\mid \Hom(E_1,E_2[-k])\neq0\},\\
\epsilon^-(E_1,E_2)&\coloneqq\min\{k\in\bZ\mid \Hom(E_1,E_2[-k])\neq0\}.
\end{align*}

We say an object $G$ is a \emph{split generator} of the category $\sD$ if $\delta_t(G,E)<\infty$ for any object $E$ in $\sD$. The dynamical invariants of an endofunctor $F\colon\sD\rightarrow\sD$ can be defined via the distances between $G$ and $F^nG$ as $n\rightarrow\infty$. For instance, the \emph{categorical entropy function} of $F$ {\cite[Definition~2.5]{DHKK}} is defined to be the function sending a real number $t$ to
$$
h_t(F)\coloneqq\lim_{n\rightarrow\infty}\frac{\log\delta_t(G,F^nG)}{n}.
$$
It is proved in \cite[Lemma~2.6]{DHKK} that the limit exists and is independent of the choice of $G$.
The categorical entropy function is a real-valued convex function in $t$ \cite[Theorem~2.1.6]{FF}. As $t\rightarrow\pm\infty$, it has finite linear growth and the growth rates
$$
\tau^\pm(F)\coloneqq\lim_{t\rightarrow\pm\infty}\frac{h_t(F)}{t}\in\bR,
$$
called the \emph{upper} and \emph{lower shifting numbers}, coincide with the linear growth rates of the upper and lower Ext-distances:
$$
\tau^\pm(F)=\lim_{n\rightarrow\infty}\frac{\epsilon^\pm(G,F^nG)}{n}.
$$
See \cite[Proposition~6.13]{EL} or \cite[Theorem~2.1.7]{FF} for the proof.

\begin{thm}
\label{thm-2}
Let $F\colon\sD\rightarrow\sD$ be an endofunctor of a triangulated category $\sD$ with a split generator $G$, and let $\sA\subseteq\sD$ be the heart of a bounded $t$-structure on $\sD$.
Then the limit
$$
\lim_{n\rightarrow\infty}\frac{\phi^+_\sA(F^nG)}{n}
$$
exists and is independent of the choices of $G$ and $\sA$.
Moreover, if $\sD$ admits a Serre functor, then the limit
$$
\lim_{n\rightarrow\infty}\frac{\phi^-_\sA(F^nG)}{n}
$$
also exists and is independent of the choices of $G$ and $\sA$.
\end{thm}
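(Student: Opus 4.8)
The plan is to compare the extremal degrees $\phi^\pm_\sA$ with the Ext-distances $\epsilon^\pm$ up to an error that is \emph{bounded independently of $n$}, and then invoke the known identity $\tau^\pm(F)=\lim_n\epsilon^\pm(G,F^nG)/n$ recalled above.

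\textbf{Reformulation and reductions.} First I would rewrite the extremal degrees purely in terms of $\Hom$-spaces: for every nonzero $E$,
$$
\phi^+_\sA(E)=\max\{k\in\bZ : \Hom(A,E[-k])\neq0\text{ for some }0\neq A\in\sA\},
$$
$$
\phi^-_\sA(E)=-\max\{k\in\bZ : \Hom(E,A[-k])\neq0\text{ for some }0\neq A\in\sA\}.
$$
Both follow from axioms (a), (b): applying $\Hom(A,-)$, resp.\ $\Hom(-,A)$, to the truncation triangles of $E$ shows no larger $k$ occurs, while the value is attained at the top, resp.\ bottom, $\sA$-cohomology object of $E$ because the relevant identity map survives. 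Next I would reduce to a generator in $\sA$: for an arbitrary split generator $G$, the object $G_0:=\bigoplus_j H^j_\sA(G)$ lies in $\sA$ and again split-generates $\sD$, since $G$ lies in the thick subcategory $\langle G_0\rangle$ via its Postnikov tower; and if $\sD$ has a Serre functor $S$, then $SG_0$ split-generates and split-\emph{co}generates $\sD$ (as $\Hom(E,SG_0[k])\cong\Hom(G_0,E[-k])^\vee$), so $G_0^\vee:=\bigoplus_j H^j_\sA(SG_0)\in\sA$ is simultaneously a split generator and a split cogenerator. Taking $A=G_0$ above gives at once $\epsilon^+(G_0,F^nG_0)\le\phi^+_\sA(F^nG_0)$ for all $n$, hence $\liminf_n\phi^+_\sA(F^nG_0)/n\ge\tau^+(F)$.

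\textbf{The key estimate (the main obstacle).} The crux is the reverse, uniform-in-$n$ bound $\phi^+_\sA(E)\le\epsilon^+(G_0,E)+C$ for all $0\neq E\in\sD$, with $C=C(G_0,\sA)$ a constant. I would first note, via the Serre functor, that $\Hom(G_0,A[i])\cong\Hom(A,SG_0[-i])^\vee$ vanishes for $i>D:=\phi^+_\sA(SG_0)\ge0$ and (as $G_0,A\in\sA$) for $i<0$, so $\bigoplus_i\Hom(G_0,A[i])$ is concentrated in degrees $[0,D]$ for every $0\neq A\in\sA$. Combined with smoothness of $\sD$ (so that $\mathrm{REnd}(G_0)$ is a smooth proper connective dg-algebra, over which perfect modules concentrated in a bounded range of degrees admit presentations by boundedly many shifted copies of the algebra), this should yield a constant $C$ such that every $0\neq A\in\sA$ is a direct summand of an object built from $G_0$ by finitely many exact triangles using only the shifts $G_0[l]$ with $|l|\le C$. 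A d\'evissage along such a presentation then gives $\epsilon^+(A,E)\le\epsilon^+(G_0,E)+C$ for every $0\neq A\in\sA$, whence $\phi^+_\sA(E)\le\epsilon^+(G_0,E)+C$ by the reformulation above. This is exactly the analogue of the estimate behind \cite[Theorem~2.2.6]{FF}, with the heart $\sA$ and smoothness replacing a central charge and the mass function; making the bounded-presentation statement precise is the step I expect to require the most care.

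\textbf{Conclusion.} Squeezing $\epsilon^+(G_0,F^nG_0)\le\phi^+_\sA(F^nG_0)\le\epsilon^+(G_0,F^nG_0)+C$ and dividing by $n$ gives $\lim_n\phi^+_\sA(F^nG_0)/n=\tau^+(F)$. To pass to an arbitrary split generator $G$ I would use that for \emph{fixed} objects $H_1,H_2$, each lying in the thick subcategory generated by the other, a d\'evissage yields $|\epsilon^+(H_1,W)-\epsilon^+(H_2,W)|$ and $|\epsilon^+(W,H_1)-\epsilon^+(W,H_2)|$ bounded uniformly in $W$; applying this with $W=F^nG$ (and $H_i\in\{G_0,G\}$) together with the key estimate gives $\lim_n\phi^+_\sA(F^nG)/n=\tau^+(F)$ for every split generator $G$. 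Independence of $\sA$ is then automatic, the common value being $\tau^+(F)$. Finally, when $\sD$ has a Serre functor, $\epsilon^-(X,Y)=-\epsilon^+(Y,SX)$ turns the reformulation into $\phi^-_\sA(E)=-\max_{0\neq A\in\sA}\epsilon^+(E,A)$; running the argument of the previous two paragraphs with $\epsilon^+$ taken in the \emph{second} variable, with $G_0^\vee$ in place of $G_0$, and with the bounded-presentation statement used to bound $\epsilon^+(E,A)$ above by $\epsilon^+(E,G_0^\vee)+C'$, gives $|\phi^-_\sA(F^nG)+\epsilon^+(F^nG,G_0^\vee)|\le C'$; since $\epsilon^+(F^nG,G_0^\vee)$ differs by a bounded amount from $\epsilon^+(F^nG,SG_0)=-\epsilon^-(G_0,F^nG)$, this yields $\lim_n\phi^-_\sA(F^nG)/n=\tau^-(F)$, again independently of $G$ and $\sA$.
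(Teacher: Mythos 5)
Your route is essentially sound but genuinely different from the paper's, and its entire weight rests on a lemma you only sketch. You aim for a two-sided comparison $\epsilon^+(G_0,E)\le\phi^+_\sA(E)\le\epsilon^+(G_0,E)+C$ (and its $\phi^-$ analogue), uniform in $E$, whose crux is the bounded-shift presentation claim: every nonzero $A\in\sA$ is a summand of an object built from copies of $G_0[l]$ with $|l|\le C$, with $C$ independent of $A$. That claim is true and provable along the lines you indicate (resolution of the diagonal for a smooth proper dg category: $M\simeq M\otimes^{L}_{R}R_\Delta$ is a summand of an object built from finitely many $(M\otimes_{\mathbf k}R)[s_j]$, and over a field $M\otimes_{\mathbf k}R$ splits into shifts of $R$ indexed by the degrees where $H^i(M)=\Hom(G_0,A[i])$ is nonzero, which Serre duality confines to $[0,\phi^+_\sA(SG_0)]$ uniformly in $A$), but writing it out carefully is where all the work of your proof would sit. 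The paper avoids any such uniform estimate: it obtains $\tau^+(F)\le\liminf_n\phi^+_\sA(F^nG)/n$ from the single nonvanishing $\Hom(G,F^nG[-\epsilon^+(G,F^nG)])\neq0$, which forces $\phi^-_\sA(G)\le\phi^+_\sA(F^nG)-\epsilon^+(G,F^nG)$, and the reverse inequality directly from the definition of the complexity function: any tower realizing $F^nG\oplus\star$ from shifts $G[k_i]$ must use some $k_i\ge\phi^+_\sA(F^nG)-\phi^+_\sA(G)$, hence $\delta_t(G,F^nG)\ge e^{(\phi^+_\sA(F^nG)-\phi^+_\sA(G))t}$ for $t>0$, and $\tau^+(F)=\lim_{t\to\infty}h_t(F)/t$ finishes the job (similarly for $\phi^-$ with $t<0$ and Serre duality). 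So the paper's argument is shorter, uses no smoothness or diagonal resolution beyond the recalled facts about $h_t$ and $\epsilon^\pm$, and its $\phi^+$ half never invokes the Serre functor, whereas your key estimate needs Serre duality already for $\phi^+$ (harmless under the paper's standing saturatedness convention, but an extra input relative to the statement). What your approach buys, if the presentation lemma is proved in full, is a strictly stronger, effective conclusion: $\phi^\pm_\sA(F^nG)$ agrees with the corresponding Ext-distance up to a constant independent of $n$, not merely asymptotically. One small slip: for $G_0\in\sA$ the dg algebra $\mathrm{REnd}(G_0)$ is coconnective ($\Hom^{<0}=0$), not connective, though connectivity is not actually needed for the diagonal argument.
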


\begin{proof}
Let $\sA$ be the heart of a bounded $t$-structure on $\sD$.
Since
$$
\Hom(G,F^nG[-\epsilon^+(G,F^nG)])\neq0,
$$
one has
$$
\phi^-_\sA(G)\leq\phi^+_\sA(F^nG)-\epsilon^+(G,F^nG).
$$
By dividing by $n$ and letting $n\rightarrow\infty$, one obtains
\begin{equation}
\label{eqn:+}
\tau^+(F)\leq\liminf_{n\rightarrow\infty}\frac{\phi^+_\sA(F^nG)}{n}.
\end{equation}
On the other hand, observe that for any sequence of exact triangles
$$
\xymatrix@C=.5em{
{} \ar@{}[r] & 0 \ar[rrrr] &&&& \star \ar[rrrr] \ar[dll] &&&& \star
\ar[rr] \ar[dll] && \cdots \ar[rr] && \star
\ar[rrrr] &&&& \star \ar[dll] \ar@{=}[r] &  F^nG\oplus\star \\
&&& G[k_1] \ar@{-->}[ull] &&&& G[k_2] \ar@{-->}[ull] &&&&&&&& G[k_m] \ar@{-->}[ull] 
}
$$
we have
$$
\phi^+_\sA(F^nG)\leq\phi^+_\sA(F^nG\oplus\star)\leq\max\{\phi^+_\sA(G[k_1]),\ldots,\phi^+_\sA(G[k_m])\}.
$$
Thus at least one of the $k_1,\ldots,k_m$ is not smaller than $\phi^+_\sA(F^nG)-\phi^+_\sA(G)$. Therefore
$$
\delta_t(G,F^nG)\geq e^{\left(\phi^+_\sA(F^nG)-\phi^+_\sA(G)\right)t}  \quad \text{ for any }t>0.
$$
Hence the categorical entropy function satisfies
$$
h_t(F)=\lim_{n\rightarrow\infty}\frac{\log\delta_t(G,F^nG)}{n}\geq t\cdot\limsup_{n\rightarrow\infty}\frac{\phi^+_\sA(F^nG)}{n} \quad (t>0).
$$
Thus one obtains a lower bound of the upper shifting number
$$
\tau^+(F)=\lim_{t\rightarrow\infty}\frac{h_t(F)}{t}\geq\limsup_{n\rightarrow\infty}\frac{\phi^+_\sA(F^nG)}{n}.
$$
Combining with (\ref{eqn:+}), one deduces that the limit
$$
\lim_{n\rightarrow\infty}\frac{\phi^+_\sA(F^nG)}{n}
$$
exists and coincides with $\tau^+(F)$. In particular, it does not depend on the choice of the split generator $G$ and the heart $\sA$. This proves the first part of Theorem~\ref{thm-2}.

Suppose $\sD$ admits a Serre functor $S\colon\sD\rightarrow\sD$. Then we have
$$
\Hom(F^nG[-\epsilon^-(G,F^nG)],SG)\neq0,
$$
which implies that
$$
\phi^-_\sA(F^nG)-\epsilon^-(G,F^nG)\leq\phi^+_\sA(SG).
$$
By dividing by $n$ and letting $n\rightarrow\infty$, one obtains
\begin{equation}
\label{eqn:-}
\tau^-(F)\geq\limsup_{n\rightarrow\infty}\frac{\phi^-_\sA(F^nG)}{n}.
\end{equation}
On the other hand, observe that for any sequence of exact triangles
$$
\xymatrix@C=.5em{
{} \ar@{}[r] & 0 \ar[rrrr] &&&& \star \ar[rrrr] \ar[dll] &&&& \star
\ar[rr] \ar[dll] && \cdots \ar[rr] && \star
\ar[rrrr] &&&& \star \ar[dll] \ar@{=}[r] &  F^nG\oplus\star \\
&&& G[k_1] \ar@{-->}[ull] &&&& G[k_2] \ar@{-->}[ull] &&&&&&&& G[k_m] \ar@{-->}[ull] 
}
$$
we have
$$
\phi^-_\sA(F^nG)\geq\phi^-_\sA(F^nG\oplus\star)\geq\min\{\phi^-_\sA(G[k_1]),\ldots,\phi^-_\sA(G[k_m])\}.
$$
Thus at least one of the $k_1,\ldots,k_m$ is not larger than $\phi^-_\sA(F^nG)-\phi^-_\sA(G)$.
Hence
$$
\delta_t(G,F^nG)\geq e^{\left(\phi^-_\sA(F^nG)-\phi^-_\sA(G)\right)t}  \quad \text{ for any }t<0.
$$
One then obtains that
$$
h_t(F)=\lim_{n\rightarrow\infty}\frac{\log\delta_t(G,F^nG)}{n}\geq t\cdot\liminf_{n\rightarrow\infty}\frac{\phi^-_\sA(F^nG)}{n} \quad (t<0).
$$
By dividing both sides by $t$ and taking $t\rightarrow-\infty$, we have
$$
\tau^-(F)=\lim_{t\rightarrow-\infty}\frac{h_t(F)}{t}\leq\liminf_{n\rightarrow\infty}\frac{\phi^-_\sA(F^nG)}{n}.
$$
Combining with (\ref{eqn:-}), one deduces that the limit
$$
\lim_{n\rightarrow\infty}\frac{\phi^-_\sA(F^nG)}{n}
$$
exists and coincides with $\tau^-(F)$. This concludes the proof.
\end{proof}

\begin{rmk}
\label{rmk:simpler}
We explain in this remark that Theorem~\ref{thm-2} could provide a simpler way of computing the shifting numbers of endofunctors comparing with previous methods. In order to compute $\tau^\pm(F)$, it usually is necessary to understand at least one of the following:
\begin{itemize}
\item $\delta_t(G,F^nG)$: usually not computable;
\item $\epsilon_t(G,F^nG)$: usually harder to compute than $\phi^\pm_\sA(F^nG)$;
\item $\phi^\pm_\sigma(F^nG)$: usually harder to compute than $\phi^\pm_\sA(F^nG)$; for instance, let $\sD=\Db\Coh(X)$ be the bounded derived category of coherent sheaves on a smooth projective variety of dimension at least two, then $\phi^\pm_{\Coh(X)}(F^nG)$ is usually easier to compute than $\phi^\pm_\sigma(F^nG)$ since there is no stability conditions with heart $\sA=\Coh(X)$.
\end{itemize}
In other words, the main advantage of computing $\tau^\pm(F)$ via Theorem~\ref{thm-2} is that one can freely choose the heart $\sA$ to compute $\phi^\pm_\sA(F^nG)$ without any restrictions on $\sA$.
\end{rmk}

\section{Shifting numbers of abelian varieties}
We prove the following theorem in this section.
\begin{thm}
\label{thm-3}
Let $X$ be an abelian variety of dimension $d$ over a field $\mathbf{k}$, and let $\sD=\Db(X)$ be the bounded derived category of coherent sheaves on $X$. Then the upper and lower shifting numbers coincide for any $F\in\Aut(\sD)$. Moreover, the map
$$
\tau=\tau^\pm\colon\Aut(\sD)\rightarrow\bR
$$
is a quasimorphism.
\end{thm}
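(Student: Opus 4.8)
The plan is to apply Theorem~\ref{thm-2} with the standard heart $\sA=\Coh(X)$ --- for which $\phi^{\pm}_{\Coh(X)}(E)$ record (up to sign) the bottom and top degrees in which $E\in\Db(X)$ has nonzero cohomology sheaves --- and to reduce both assertions to a single integer-valued invariant of the Fourier--Mukai kernel of an autoequivalence. The key structural input is the classification of derived equivalences of abelian varieties due to Orlov, resting on Mukai's theory of semihomogeneous sheaves: every $F\in\Aut(\Db(X))$ is a Fourier--Mukai transform $\Phi_{\cP_F}$ whose kernel $\cP_F\in\Db(X\times X)$ is unique up to isomorphism, composition of autoequivalences corresponds to convolution of kernels (so $\cP_{F^{n}}$ is the $n$-fold convolution of $\cP_F$), and --- crucially --- every such $\cP_F$ is a shift of a single coherent sheaf, $\cP_F\cong\cQ_F[m_F]$ with $\cQ_F\in\Coh(X\times X)$ and a well-defined $m_F\in\bZ$. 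Over an arbitrary base field I would base change to $\overline{\mathbf k}$, apply this there, and descend: forming Fourier--Mukai kernels commutes with the faithfully flat base change $\overline{\mathbf k}/\mathbf k$, and the degrees in which a complex has nonzero cohomology sheaves are detected after it.

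The second ingredient is elementary amplitude bookkeeping. For any coherent sheaf $\cQ$ on $X\times X$ (which has dimension $2d$) and any nonzero $E\in\Db(X)$, the object $\Phi_{\cQ}(E)=Rp_{2*}\bigl(p_1^{*}E\otimes^{L}\cQ\bigr)$ has its nonzero cohomology sheaves confined to an interval of degrees whose length is at most $\bigl(\phi^{+}_{\Coh(X)}(E)-\phi^{-}_{\Coh(X)}(E)\bigr)+3d$ and whose endpoints depend only on $\phi^{\pm}_{\Coh(X)}(E)$ and $d$, not on $\cQ$ --- because $-\otimes^{L}\cQ$ has Tor-amplitude at most $2d$ and $Rp_{2*}$ has cohomological dimension at most $d$. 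Likewise, for coherent sheaves $\cQ_1,\cQ_2$ on $X\times X$, the convolution $\cQ_1*\cQ_2$ has its nonzero cohomology sheaves in degrees between $-3d$ and $d$.

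Now fix a split generator $G$ of $\Db(X)$ (one exists since $\Db(X)$ is saturated). As $\omega_X\cong\cO_X$, the Serre functor of $\Db(X)$ is $-[d]$, so Theorem~\ref{thm-2} applies with $\sA=\Coh(X)$ and yields $\tau^{\pm}(F)=\lim_{n\to\infty}\phi^{\pm}_{\Coh(X)}(F^{n}G)/n$. Since $F^{n}G\cong\Phi_{\cQ_{F^{n}}}(G)[m_{F^{n}}]$, the first estimate (with $E=G$) shows that $\phi^{+}_{\Coh(X)}(F^{n}G)$ and $\phi^{-}_{\Coh(X)}(F^{n}G)$ both lie in a fixed-width interval $[\,m_{F^{n}}+\alpha,\ m_{F^{n}}+\beta\,]$ whose width $\beta-\alpha$ depends only on $G$ and $d$. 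Dividing by $n$ and letting $n\to\infty$ gives $\tau^{+}(F)=\tau^{-}(F)$, so $\tau=\tau^{\pm}$ is well defined; and it gives $\lvert\phi^{+}_{\Coh(X)}(F^{n}G)-m_{F^{n}}\rvert\le\max(\lvert\alpha\rvert,\lvert\beta\rvert)$, whence $\tau(F)=\lim_{n\to\infty}m_{F^{n}}/n$. Finally, since $F_1F_2\in\Aut(\Db(X))$ its kernel is again a shift of a coherent sheaf, and $\cP_{F_1F_2}\cong\cP_{F_1}*\cP_{F_2}\cong(\cQ_{F_1}*\cQ_{F_2})[m_{F_1}+m_{F_2}]$; by the second estimate $\cQ_{F_1}*\cQ_{F_2}$ lies in degrees between $-3d$ and $d$, forcing $\lvert m_{F_1F_2}-m_{F_1}-m_{F_2}\rvert\le 3d$. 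Thus $m\colon\Aut(\Db(X))\to\bZ$ is a quasimorphism; telescoping gives $\lvert m_{F^{n}}-nm_F\rvert\le (n-1)\cdot 3d$, hence $\lvert\tau(F)-m_F\rvert\le 3d$ for every $F$, and therefore $\lvert\tau(F_1F_2)-\tau(F_1)-\tau(F_2)\rvert\le 12d$. This proves that $\tau$ is a quasimorphism.

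I expect the first step to be the main obstacle: isolating and citing, in exactly the form needed, the fact that the Fourier--Mukai kernel of \emph{every} derived autoequivalence of an abelian variety is a shift of a coherent sheaf --- together with uniqueness of kernels and compatibility with convolution --- and securing all of this over an arbitrary base field rather than only over $\bC$ or an algebraically closed field. Once that structural statement is in hand, the Tor-amplitude and pushforward estimates and the telescoping argument are routine.
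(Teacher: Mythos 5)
Your argument is correct, and while it relies on the same structural inputs as the paper --- Orlov's theorem that the kernel of any autoequivalence of $\Db(X)$ is a shift of a coherent sheaf, Tor-amplitude and pushforward bounds on $X\times X$, and Theorem~\ref{thm-2} applied to $\sA=\Coh(X)$ --- it routes the quasimorphism property differently. The paper never introduces the kernel-shift integer: it proves a uniform amplitude statement (Lemma~\ref{lemma}: $n_F\le\phi^-_\sA(F(E))\le\phi^+_\sA(F(E))\le n_F+3d$ for all $E\in\Coh(X)$), deduces $\tau^+=\tau^-$ exactly as you do, and then observes that $\tau$ is the homogenization of $\widetilde{\tau}(F)=\phi^+_\sA(FG)$ for the explicit generator $G=\oplus_{i=0}^d\cO(i)\in\Coh(X)$; the quasimorphism bound (defect $6d$) comes from a direct d\'evissage of $F_2G$ into heart pieces $A_i[-i]$, $0\le i\le 3d$, to which $F_1$ is applied and Lemma~\ref{lemma} invoked again. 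Your version instead works at the level of kernels: you extract the integer $m_F$ from $\cP_F\cong\cQ_F[m_F]$, prove $m$ is a quasimorphism via uniqueness of Fourier--Mukai kernels and compatibility of composition with convolution, and transfer to $\tau$ through $|\tau(F)-m_F|\le 3d$ (final defect $12d$). This costs you extra standard machinery (kernel uniqueness, the convolution formula, and the amplitude estimate on $X\times X\times X$) that the paper's heart-level argument avoids, but it buys a cleaner conceptual picture: $\tau(F)=\lim_n m_{F^n}/n$ and $\tau$ is within bounded distance of the integer-valued map $F\mapsto m_F$, which meshes nicely with the $\bZ$-central-extension description of $\Aut(\Db(X))/(X\times\hat{X})$ mentioned in the paper's final remark. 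Your explicit base-change step to handle non-closed $\mathbf{k}$ is a point of care the paper passes over silently (it cites Orlov's Proposition~3.2 directly), and the same descent would be needed there too; minor quibbles only: your convolution amplitude interval $[-3d,d]$ can be sharpened to $[-2d,d]$, and the existence of a split generator is most cleanly cited from Orlov's generation result (as the paper does) rather than from saturatedness.
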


\begin{lemma}
\label{lemma}
Let $F\in\Aut(\sD)$ be an autoequivalence of $\sD$.
There exists an integer $n_F\in\bZ$ such that
$$
n_F\leq\phi_\sA^-\big(F(E)\big)\leq\phi_\sA^+\big(F(E)\big)\leq n_F+3d
$$
holds for any $E\in\sA\coloneqq\Coh(X)$.
\end{lemma}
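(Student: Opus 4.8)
The plan is to deduce the lemma from a uniform cohomological-amplitude estimate, using the known structure of autoequivalences of abelian varieties. By the classification of such autoequivalences (Orlov, building on Mukai), every exact autoequivalence $F$ of $\sD=\Db(X)$ is, up to shift, a Fourier--Mukai transform with a \emph{sheaf} kernel: there exist an integer $m$ and a coherent sheaf $\mathcal{E}$ on $X\times X$ with
$$
F\;\cong\;\Phi_{\mathcal{E}}\circ[m],\qquad \Phi_{\mathcal{E}}(-)\coloneqq Rp_{2*}\bigl(p_1^{*}(-)\otimes^{L}\mathcal{E}\bigr),
$$
where $p_1,p_2\colon X\times X\to X$ are the two projections. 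This structural fact is the only external input; everything after it is routine.

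Next I would estimate the cohomological amplitude of $\Phi_{\mathcal{E}}(E)$ for $E\in\sA=\Coh(X)$. Since $p_1$ is flat, $p_1^{*}E$ is a sheaf, concentrated in cohomological degree $0$. As $X\times X$ is smooth of dimension $2d$, every coherent sheaf on it has Tor-dimension at most $2d$, so $p_1^{*}E\otimes^{L}\mathcal{E}$ has nonzero cohomology sheaves only in degrees $[-2d,0]$. Finally $p_2$ is proper of relative dimension $d$, so $Rp_{2*}$ raises cohomological degree by at most $d$; hence $\Phi_{\mathcal{E}}(E)$ has nonzero cohomology sheaves only in degrees $[-2d,d]$. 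Recalling that, for the heart $\Coh(X)$, the quantities $\phi^{+}_\sA$ and $\phi^{-}_\sA$ are minus the lowest and minus the highest degree of nonzero cohomology respectively, this translates into
$$
-d\;\leq\;\phi^{-}_\sA\bigl(\Phi_{\mathcal{E}}(E)\bigr)\;\leq\;\phi^{+}_\sA\bigl(\Phi_{\mathcal{E}}(E)\bigr)\;\leq\;2d .
$$

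To conclude I would pass through the shift: since $\Phi_{\mathcal{E}}$ is triangulated, $\phi^{\pm}_\sA(F(E))=\phi^{\pm}_\sA(\Phi_{\mathcal{E}}(E))+m$, so putting $n_F\coloneqq m-d\in\bZ$ gives $n_F\leq\phi^{-}_\sA(F(E))\leq\phi^{+}_\sA(F(E))\leq n_F+3d$ for every $E\in\Coh(X)$, with $n_F$ depending on $F$ but not on $E$. The main obstacle is the first paragraph. The collection of functors of the form $\Phi_{\mathcal{E}}\circ[m]$ with $\mathcal{E}$ a sheaf is not visibly closed under composition, since convolving two sheaf kernels on $X\times X$ typically yields an honest complex; so one really needs the classification result asserting that an \emph{arbitrary} autoequivalence --- not just a standard generator such as a shift, a translation, a twist by a line bundle, or a Poincar\'e-type Fourier--Mukai transform --- nevertheless has a kernel that is a shift of a sheaf. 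Granting that, the amplitude estimate is elementary, and one could even narrow the window to width $d$ whenever the kernel is flat over the first factor; but width $3d$ is all that is needed for the quasimorphism statement that follows.
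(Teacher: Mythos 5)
Your proposal is correct and follows essentially the same route as the paper: invoke Orlov's result that the kernel of $F$ is a shift of a sheaf on $X\times X$, then bound the cohomological amplitude of the Fourier--Mukai transform using a locally free resolution of length at most $2d$ (Tor-amplitude) together with vanishing of higher direct images along $p_2$ above degree $d$. You simply spell out the amplitude bookkeeping that the paper leaves implicit, so there is nothing to add.
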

\begin{proof}
The equivalence $F\cong\Phi_\cE$ is a Fourier--Mukai transform, where the kernel $\cE$ is isomorphic to a sheaf up to a shift \cite[Proposition~3.2]{Orl}.
The lemma then follows from:
\begin{enumerate}[label=(\alph*)]
\item any coherent sheaf $\cE$ on $X\times X$ admits a locally free resolution of length at most $2d$;
\item the higher direct image of a coherent sheaf on $X\times X$ under the second projection $X\times X\xrightarrow{\pi_2} X$ vanishes at degree $>d=\dim(X)$.
\end{enumerate}
\end{proof}

\begin{proof}[Proof of Theorem~\ref{thm-3}]
Let $\cO(1)$ be a very ample line bundle on $X$.
Then
$$
G\coloneqq\bigoplus_{i=0}^d\cO(i)\in\sA
$$
is a split generator of $\sD$ by \cite[Theorem~4]{Orl2}.
By Theorem~\ref{thm-2} and Lemma~\ref{lemma}, for any $F\in\Aut(\sD)$ we have
$$
\tau^+(F)-\tau^-(F)=\lim_{n\rightarrow\infty}\frac{\phi_\sA^+(F^nG)-\phi_\sA^-(F^nG)}{n}\leq
\lim_{n\rightarrow\infty}\frac{3d}{n}=0.
$$
Thus the upper and lower shifting numbers coincide for any autoequivalence $F$.
Note that $\tau(F)\coloneqq\tau^\pm(F)$ is the homogenization of
$$
\widetilde{\tau}\colon\Aut(\sD)\rightarrow\bR; \qquad F\mapsto\phi_\sA^+\big(F(G)\big).
$$
Therefore, to prove the theorem, it suffices to show that $\widetilde{\tau}$ is a quasimorphism. We claim that the following inequality holds true for any $F_1,F_2\in\Aut(\sD)$:
$$
|\widetilde{\tau}(F_1F_2)-\widetilde{\tau}(F_1)-\widetilde{\tau}(F_2)|\leq6d.
$$
One can assume that $\widetilde{\tau}(F_1)=\widetilde{\tau}(F_2)=0$ by composing $F_1$ and $F_2$ with appropriate powers of the shift functor. By Lemma~\ref{lemma}, there is a sequence of exact triangles
$$
\xymatrix@C=.5em{
{} \ar@{}[r] & 0 \ar[rrrr] &&&& \star \ar[rrrr] \ar[dll] &&&& \star
\ar[rr] \ar[dll] && \cdots \ar[rr] && \star
\ar[rrrr] &&&& F_2G \ar[dll] \ar@{}[r] &   \\
&&& A_0 \ar@{-->}[ull] &&&& A_1[-1] \ar@{-->}[ull] &&&&&&&& A_{3d}[-3d] \ar@{-->}[ull] 
}
$$
for some objects $A_0,\ldots,A_{3d}$ in $\sA$.
By applying $F_1$ to the sequence, one obtains
$$
\widetilde{\tau}(F_1F_2)=
\phi_\sA^+(F_1F_2G)\leq\max\{\phi_\sA^+(F_1A_0),\phi_\sA^+(F_1A_1)-1,\ldots,\phi_\sA^+(F_1A_{3d})-3d\}\leq 3d
$$
and
\begin{align*}
\widetilde{\tau}(F_1F_2)=\phi_\sA^+(F_1F_2G) & \geq\phi_\sA^-(F_1F_2G) \\
&\geq\min\{\phi_\sA^-(F_1A_0),\phi_\sA^-(F_1A_1)-1,\ldots,\phi_\sA^-(F_1A_{3d})-3d\}\geq-6d.
\end{align*}
This concludes the proof.
\end{proof}

\begin{rmk}
Let $\hat{X}$ be the dual abelian variety of $X$. Then the product group $X\times\hat{X}$ is isomorphic to a normal subgroup of the group of autoequivalences $\Aut(\Db(X))$, where the elements of $X$ give translations $t_{x*}$, and the elements of $\hat{X}$ give tensor products $L\otimes-$ where $L\in\Pic^0(X)$ \cite{Orl}. Since these two types of functors do not effect $\phi^\pm_{\sA=\Coh(X)}$, the shifting numbers descend to the quotient group:
$$
\tau=\tau^\pm\colon\Aut(\Db(X))/(X\times\hat{X})\rightarrow\bR.
$$
There is an isomorphism 
$$
\Aut(\Db(X))/(X\times\hat{X})\cong\widetilde{U}(X\times\hat{X}),
$$
where $\widetilde{U}(X\times\hat{X})$ is a central extension of the group of certain isometric automorphisms $U(X\times\hat{X})$ by the cyclic group $\bZ$ \cite{Orl}.
It would be interesting to compare the quasimorphism on $\widetilde{U}(X\times\hat{X})$ defined by the shifting numbers of autoequivalences with the classical quasimorphisms associated to the central extensions of Lie groups of hermitian types \cite{BIW}.
Such relations have been established in \cite[Section~5]{FF} for abelian surfaces.
\end{rmk}

\section{Statements of conflict of interest and data availability}
The  author states that there is no conflict of interest.
Data sharing not applicable to this article as no datasets were generated or analysed during the current study.

\bigskip

\ContactInfo
\end{document}